\documentclass[12pt]{elsarticle}

\usepackage{amsmath,amsfonts,amssymb}
\usepackage{mathtools}
\usepackage{mathdots}
\usepackage{graphicx}
\usepackage[colorlinks,linkcolor=blue,citecolor=blue]{hyperref}
\usepackage{color}

%\journal{Indagationes Mathematicae}

\usepackage{amssymb,amsmath,amsfonts}
\usepackage{color}
\usepackage{amsthm}

\newcommand{\QQ}{\ensuremath{\mathbb{Q}}}
\newtheorem{theorem}{Theorem}

\newtheorem{proposition}[theorem]{Proposition}

\theoremstyle{definition}

\newtheorem{example}[theorem]{Example}

\begin{document}

\begin{frontmatter}

\title{Galois groups and rational solutions of $p(X) = A$.}

\author[label1]{G.J.~Groenewald} 
\author[label1]{G. Goosen}
\author[label1]{D.B.~Janse van Rensburg}
\author[label2]{A.C.M.~Ran}
\author[label1]{M.~van~Straaten}

\address[label1]{School~of~Mathematical~and~Statistical~Sciences,
North-West~University,
Research Focus: Pure and Applied Analytics,
Private~Bag~X6001,
Potchefstroom~2520,
South Africa.
E-mail: \texttt{gilbert.groenewald@nwu.ac.za, gerrit.goosen@nwu.ac.za, dawie.jansevanrensburg@nwu.ac.za, madelein.vanstraaten@nwu.ac.za}}
\address[label2]{Department of Mathematics, Faculty of Science, VU Amsterdam, De Boelelaan
    1111, 1081 HV Amsterdam, The Netherlands
    and Research Focus: Pure and Applied Analytics, North-West~University,
Potchefstroom,
South Africa. E-mail:
    \texttt{a.c.m.ran@vu.nl}}

\begin{abstract}
We extend Theorem 1 of R. Reams, A Galois approach to m-th roots of matrices with rational entries,
LAA 258 (1997), 187-194. Let $p(\lambda)$ be any polynomial over $\mathbb{Q}$ and let $A\in M_n(\mathbb{Q})$
have irreducible characteristic polynomial $f(\lambda)$ with degree n. We provide necessary and sufficient conditions
for the existence of a solution $X\in M_n(\mathbb{Q})$ of the polynomial matrix equation $p(X) = A.$ Specifically, we
find necessary and sufficient conditions for $f(p(\lambda))$ to have a factor of degree $n$ over $\mathbb{Q}.$
\end{abstract}

\begin{keyword}
Galois group, field extension, matrix polynomial, rational solution, elementary symmetric polynomials

\emph{AMS subject classifications:} 15A20, 15A24, 15B33, 12F10
\end{keyword}

\end{frontmatter}

{\date{}}

\section{Introduction} \label{sec:intro}

\bigskip

Let $A$ be an $n \times n$ matrix with rational entries, let $f(\lambda)$ denote the characteristic polynomial of $A$, and suppose further that $f(\lambda)$ is irreducible. Let $p(\lambda)$ be a polynomial of degree $m$ with rational coefficients. We are interested in finding rational solutions $X$ of the matrix equation $p(X) = A$.

This problem has been considered in the literature before. In \cite{reams} a purely algebraic approach is taken for the special case $p(\lambda)=\lambda^m$ with $m$ odd. A more constructive approach for general $p(\lambda)$ was taken in \cite{Drazin}. In \cite{GJRST2} the condition that $f(\lambda)$ is irreducible is relaxed to $A$ being nonderogatory, following a constructive approach very much related to the one in \cite{Drazin}. Another, more numerical approach can be found in \cite{FI1, FI2, FI3}. Allowing for any complex solution, the problem of finding solutions to $p(X)=A$, with $p(\lambda)$ being a holomorphic function, was considered in \cite{EU}, see also \cite{roth, Spiegel}.

The special case where $p(\lambda)=\lambda^m$, i.e. the case where $X$ is an $m$th root of $A$ has been studied in detail in several papers; see for example \cite{higham,otero, psarrakos, tenhave, wedderburn}. The case where the additional symmetry of $H$-selfadjointness is involved, is treated in \cite{GJRST}. 

Reams shows in \cite{reams} that $p(X)=A$ has a rational solution if and only if $f(p(\lambda))$ has a factor $h(\lambda)$ of degree $n$ in $\mathbb{Q}[\lambda]$; he attributes this result to previously unpublished work by Tom Laffey and Bryan Cain. He then continues with a purely algebraic condition for the case where $p(X)=X^m$ with $m$ odd, in terms of the Galois groups of $f(\lambda)$ and $f(\lambda^m)$. 
We shall expand on the first equivalence stated at the beginning of this paragraph, and connect it to a condition found in \cite{Drazin}, which results in an explicit construction of the factor $h(\lambda)$. The focus in the present article is on purely algebraic conditions. In \cite{GJRST2} (Theorem 7.1) the equation $p(X)=A$ is considered using a predominantly linear algebraic approach, leading to a result for nonderogatory matrices $A$. We shall restate that result in an alternative purely algebraic way in the final section of this paper, although the result can be obtained directly from our earlier paper \cite{GJRST2}.

\bigskip

\section{Preliminaries} \label{sec:prelim}

We shall use the following notation: the eigenvalues of $A$ (and hence the roots of $f(\lambda)$) are denoted by $\mu_i$, $i=1, \ldots , n$, and in case we have $h(\lambda)$ given, then the roots of $h(\lambda)$ are denoted by $\gamma_i$, $i=1, \ldots , n$. 

We recall in this section several definitions and results for the convenience of the reader. Notation will conform with usual practices in abstract algebra, see, e.g., \cite{pinter}.
In particular, we will adopt the convention of referring to roots (or zeroes)
of a polynomial $f(\lambda)$ which will also be taken to mean solutions of the polynomial
equation $f(\lambda)=0.$

\bigskip

 Let $f(\lambda)$ be a polynomial of degree $n$ over $\QQ$. Write

\[
    f(\lambda) = (\lambda - \mu_1)(\lambda - \mu_2) \cdots (\lambda - \mu_n)
\]

\noindent where $\mu_j \in \overline{\QQ}$ for each $j$, with $\overline{\QQ}$ denoting the algebraic closure of $\QQ$. The \emph{Galois group of $f(\lambda)$} over $\mathbb{Q}$ is defined to be the group of automorphisms of $\QQ(\mu_1,\cdots,\mu_n)$ which fix $\QQ$. We denote this group by $\mathit{Gal}(\QQ(\mu_1,\cdots,\mu_n):\QQ)$ or simply by $G$ when the context is clear.\\

\noindent In the sequel we will require our Galois group $G$ to act on a certain set of roots. We therefore state the necessary definitions.

Let $G$ be a group and $X$ a set. A \textit{group action} of $G$ on $X$ is a map 
\[
    \cdot : G \times X \rightarrow X,
\]
where we write $g \cdot x$ for the value of the map on the pair $(g,x)$, satisfying the following properties:
\begin{itemize}
    \item[(i)] $e \cdot x = x$ for all $x \in X$,
    \item[(ii)] $g \cdot (h \cdot x) = (gh) \cdot x$ for all $g,h \in G$ and all $x \in X$.
\end{itemize}

A group $G$ is said to act \textit{transitively} on $X$ if for all $x,y \in X$ there exists an element $g \in G$ such that $g \cdot x = y$.

The following result can be found in \cite{Stewart}, Proposition 22.3, or \cite{Bewersdorff}, Section 10.10.

\begin{theorem} \label{Thm:irr+GroupAction}
Let $f(\lambda)\in\mathbb{Q}[\lambda]$ be irreducible and monic. Let $X=\{\mu_1,\ldots,\mu_n\}$ be the roots of $f(\lambda)$, which lie in some fixed algebraic closure of $\mathbb{Q}$. Let $G$ be the Galois group of $f(\lambda)$. Then $G$ acts transitively on $X$ via the action
\begin{equation*}
g \cdot \mu = g(\mu),
\end{equation*} 
where $g \in G$, $\mu \in X$.
\end{theorem}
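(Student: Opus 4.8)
The plan is to verify separately that the prescribed map is a genuine group action and that it is transitive; the foundational observation used throughout is that every $g \in G$ fixes $\mathbb{Q}$ pointwise and therefore fixes the coefficients of $f(\lambda)$. First I would establish that the action is well defined, i.e.\ that $g(\mu) \in X$ whenever $\mu \in X$. Applying the field automorphism $g$ to the equation $f(\mu) = 0$ and using that $g$ fixes each rational coefficient of $f$, one obtains $f(g(\mu)) = 0$, so $g(\mu)$ is again a root of $f$ and hence lies in $X$. The two action axioms are then immediate: axiom (i) holds because the identity automorphism $e$ satisfies $e(\mu) = \mu$, and axiom (ii) holds because the group operation in $G$ is composition of automorphisms, so $(gh)(\mu) = g(h(\mu))$.

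For transitivity, I would fix two roots $\mu_i, \mu_j \in X$ and construct $g \in G$ with $g(\mu_i) = \mu_j$. Since $f$ is monic and irreducible over $\mathbb{Q}$, it is the common minimal polynomial of $\mu_i$ and $\mu_j$. The standard characterization of simple algebraic extensions then furnishes a field isomorphism $\sigma \colon \mathbb{Q}(\mu_i) \to \mathbb{Q}(\mu_j)$ that fixes $\mathbb{Q}$ pointwise and sends $\mu_i \mapsto \mu_j$: concretely, both fields are $\mathbb{Q}$-isomorphic to $\mathbb{Q}[\lambda]/(f(\lambda))$ by sending $\mu_i$ and $\mu_j$ respectively to the class of $\lambda$, and composing one such isomorphism with the inverse of the other yields $\sigma$.

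The decisive step is to extend $\sigma$ to an automorphism of the full splitting field $L = \mathbb{Q}(\mu_1, \ldots, \mu_n)$. Here I would invoke the isomorphism extension theorem: since $L$ is a splitting field of $f$ over $\mathbb{Q}(\mu_i)$ and $\sigma$ carries $f$ to itself (it fixes $\mathbb{Q}$, hence fixes the coefficients of $f$), $\sigma$ extends to an isomorphism of $L$ onto a splitting field of $f$ over $\mathbb{Q}(\mu_j)$, which is again $L$. The resulting automorphism $g$ of $L$ fixes $\mathbb{Q}$ and so belongs to $G$, and by construction $g(\mu_i) = \sigma(\mu_i) = \mu_j$. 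I expect this extension step to be the main obstacle, since it is the only part that relies on a nontrivial theorem rather than a direct verification; well-definedness and the action axioms are routine by comparison.
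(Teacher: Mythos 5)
Your proof is correct and follows the canonical argument: note that the paper does not prove this theorem at all, but cites it to Stewart (Proposition 22.3) and Bewersdorff (Section 10.10), and your route — well-definedness by applying $g$ to $f(\mu)=0$ and using that $g$ fixes the rational coefficients, the two action axioms by inspection, and transitivity by identifying both $\mathbb{Q}(\mu_i)$ and $\mathbb{Q}(\mu_j)$ with $\mathbb{Q}[\lambda]/(f(\lambda))$ and then extending the resulting $\mathbb{Q}$-isomorphism $\sigma$ to the splitting field via the isomorphism extension theorem — is exactly the proof those references give. The one step worth stating explicitly (and which you handle correctly) is that $L=\mathbb{Q}(\mu_1,\ldots,\mu_n)$ is simultaneously a splitting field of $f$ over $\mathbb{Q}(\mu_i)$ and over $\mathbb{Q}(\mu_j)$, and that $\sigma$ carries $f$ to itself, so the extension theorem produces an automorphism of $L$ fixing $\mathbb{Q}$, i.e.\ an element of $G$ sending $\mu_i$ to $\mu_j$; there are no gaps.
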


\bigskip

We recall
here the spectral mapping theorem for the finite dimensional case. For a proof, see e.g., \cite{roman}, Theorem 8.3.

\begin{theorem} 
    Let $V$ be a finite dimensional vector space over an algebraically closed field $\mathbb{F}$. Let $T : V \to V$ be a linear map with spectrum $\sigma(T)$, and let $p(\lambda)\in \mathbb{F}[\lambda]$. Then 
    \[
        \sigma(p(T)) = p(\sigma(T)) = \{p(\mu) \; | \; \mu \in \sigma(T)\}.
    \]
\end{theorem}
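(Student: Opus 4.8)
The plan is to reduce the claimed set equality to a statement about invertibility of operators, and then exploit the fact that $\mathbb{F}$ is algebraically closed to factor the relevant polynomials completely. Recall that, since $V$ is finite dimensional, a scalar $\nu \in \mathbb{F}$ lies in $\sigma(p(T))$ precisely when the operator $p(T) - \nu\, I$ fails to be invertible, equivalently when $\det(p(T) - \nu I) = 0$; the same characterisation applies to $T$ itself, so that $\sigma(T)$ is exactly the set of eigenvalues of $T$. The trivial case where $p$ is constant can be dispatched immediately, so I would assume $\deg p = k \geq 1$ with leading coefficient $c \neq 0$.

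Fix $\nu \in \mathbb{F}$. Because $\mathbb{F}$ is algebraically closed, the polynomial $p(\lambda) - \nu$ splits into linear factors,
\[
    p(\lambda) - \nu = c\,(\lambda - \alpha_1)(\lambda - \alpha_2) \cdots (\lambda - \alpha_k),
\]
where $\alpha_1, \ldots, \alpha_k$ are its roots, so that $p(\alpha_j) = \nu$ for every $j$. Substituting $T$ and using that all factors are polynomials in $T$ and hence commute, I obtain
\[
    p(T) - \nu I = c\,(T - \alpha_1 I)(T - \alpha_2 I) \cdots (T - \alpha_k I).
\]
Taking determinants and using multiplicativity gives $\det(p(T) - \nu I) = c^{\dim V}\prod_{j=1}^{k}\det(T - \alpha_j I)$, so $p(T) - \nu I$ is singular if and only if $\det(T - \alpha_j I) = 0$ for some $j$, that is, if and only if some root $\alpha_j$ of $p(\lambda)-\nu$ is an eigenvalue of $T$.

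From here both inclusions follow. If $\nu \in \sigma(p(T))$, then some $\alpha_j \in \sigma(T)$, and since $p(\alpha_j) = \nu$ we get $\nu = p(\alpha_j) \in p(\sigma(T))$. Conversely, if $\nu = p(\mu)$ for some $\mu \in \sigma(T)$, then $\mu$ is a root of $p(\lambda) - \nu$, hence equals one of the $\alpha_j$; that factor $T - \mu I$ is singular, so $p(T) - \nu I$ is singular and $\nu \in \sigma(p(T))$. This proves $\sigma(p(T)) = p(\sigma(T))$.

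The only point requiring genuine care, and the step I expect to be the main obstacle, is the passage from the factorisation of $p(T) - \nu I$ to the conclusion that it is singular exactly when one of its factors is. This is where both hypotheses are used essentially: algebraic closedness guarantees the complete splitting into linear factors, while finite dimensionality makes the determinant argument available. In infinite dimensions a product of non-invertible commuting operators can still be invertible, or an invertible operator can be written as a product with non-invertible factors, so the equivalence would fail. Everything else is bookkeeping around the identity $p(\alpha_j) = \nu$.
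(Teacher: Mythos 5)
Your proof is correct. Note that the paper does not prove this theorem at all: it quotes it as a known result and defers the proof to Roman, \emph{Advanced Linear Algebra}, Theorem 8.3, so there is no in-paper argument to compare against; the argument you give (split $p(\lambda)-\nu$ into linear factors over the algebraically closed field, then decide singularity of $p(T)-\nu I$ factor by factor) is the standard one found in such references.

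One correction to your closing commentary, though it does not affect the validity of the proof: the step you single out as essentially finite-dimensional is not. For \emph{commuting} factors, a product is invertible if and only if each factor is, in any unital ring: if $A_1\cdots A_k$ has two-sided inverse $C$, then writing the product with $A_j$ first gives $A_j\left(\prod_{i\neq j}A_i\right)C=I$, and writing it with $A_j$ last gives $C\left(\prod_{i\neq j}A_i\right)A_j=I$, so $A_j$ has both a right and a left inverse and is therefore invertible; the converse is trivial. Both infinite-dimensional phenomena you invoke (an invertible product of non-invertible operators, such as $LR=I$ for the shifts) require non-commuting factors, which cannot occur here since all factors are polynomials in $T$. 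Where finite dimensionality (together with algebraic closedness) genuinely enters is elsewhere: it guarantees $\sigma(T)\neq\emptyset$, without which even your constant case fails, and it identifies non-invertibility of $T-\nu I$ with $\nu$ being an eigenvalue, so that the spectrum in the statement is unambiguous. The determinant is thus a convenience in your write-up, not the load-bearing ingredient.
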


The following proposition is a restatement of \cite{Drazin}, Proposition 2.3, for the casein which  we are interested.

\begin{proposition}
Let $A, X\in M_n(\mathbb{Q})$ and $p(\lambda)\in\mathbb{Q}[\lambda]$, and suppose that the characteristic polynomial of $A$, denoted by $f(\lambda)$ is irreducible and that $A=p(X)$. \\
Then, for each eigenvalue $\mu_j$ of $A$, $j=1, \hdots, n$, the equation $p(\gamma)=\mu_j$ has at least one solution $\gamma=\gamma_j\in\mathbb{Q}(\mu_j)$.
\end{proposition}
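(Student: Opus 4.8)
The plan is to show that $X$ is a polynomial in $A$ with rational coefficients, and then to translate the identity $A=p(X)$ into a divisibility statement among polynomials over $\mathbb{Q}$ that can be evaluated at each root $\mu_j$.

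First I would observe that since $f(\lambda)$ is irreducible, $A$ is nonderogatory: the minimal polynomial of $A$ divides $f(\lambda)$, is nonconstant, and hence (as $f$ is irreducible and both are monic) equals $f(\lambda)$. Thus the minimal and characteristic polynomials of $A$ coincide. Next, because $A=p(X)$ is itself a polynomial in $X$, the matrices $A$ and $X$ commute. For a nonderogatory matrix the commutant over a field $\mathbb{F}$ coincides with the algebra $\mathbb{F}[A]$ of polynomials in $A$: this follows from the standard fact that the commutant always contains $\mathbb{F}[A]$, whose dimension equals the degree of the minimal polynomial, namely $n$, while the commutant has dimension exactly $n$ precisely when $A$ is nonderogatory, forcing equality. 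Applying this over $\mathbb{Q}$ (all data being rational), I obtain a polynomial $r(\lambda)\in\mathbb{Q}[\lambda]$ with $X=r(A)$.

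With $X=r(A)$ in hand, the equation $A=p(X)=p(r(A))$ says that $A$ is annihilated by the polynomial $p(r(\lambda))-\lambda\in\mathbb{Q}[\lambda]$. Since the minimal polynomial of $A$ is $f(\lambda)$, it follows that $f(\lambda)\mid p(r(\lambda))-\lambda$ in $\mathbb{Q}[\lambda]$. Evaluating at an arbitrary root $\mu_j$ of $f$ then yields $p(r(\mu_j))-\mu_j=0$. Hence $\gamma_j:=r(\mu_j)$ solves $p(\gamma)=\mu_j$, and since $r$ has rational coefficients and $\mu_j\in\mathbb{Q}(\mu_j)$, we have $\gamma_j\in\mathbb{Q}(\mu_j)$, as required.

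The main obstacle is the second step, producing $r(\lambda)\in\mathbb{Q}[\lambda]$ with $X=r(A)$, since everything afterwards is a formal manipulation. The spectral mapping theorem alone would only guarantee that each $\mu_j$ equals $p(\nu)$ for some complex eigenvalue $\nu$ of $X$, with no control over the field in which $\nu$ lies; it is precisely the nonderogatory commutant argument that upgrades this to a root lying in $\mathbb{Q}(\mu_j)$. Care must be taken to run the commutant dimension count over $\mathbb{Q}$ rather than over $\overline{\mathbb{Q}}$, which is legitimate because $A$ and $X$ are rational and the minimal polynomial of $A$ has degree $n$ regardless of the base field.
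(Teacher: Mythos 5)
Your proof is correct, but it follows a genuinely different route from the paper's. The paper never proves this proposition directly: it cites Drazin and, in Theorem \ref{Thm:main}, establishes (i) $\Rightarrow$ (iii) by passing through statement (ii) --- first taking the minimal polynomial $h(\lambda)$ of $X$, invoking the spectral mapping theorem and Cayley--Hamilton to see that $h$ is a rational degree-$n$ factor of $f(p(\lambda))$, and then using a field-degree count $\left[\mathbb{Q}(\gamma_i):\mathbb{Q}\right] = \left[\mathbb{Q}(\gamma_i):\mathbb{Q}(\mu_{j_i})\right]\cdot\left[\mathbb{Q}(\mu_{j_i}):\mathbb{Q}\right]$ to force $\gamma_i \in \mathbb{Q}(\mu_{j_i})$. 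You instead exploit that $f$ irreducible makes $A$ nonderogatory, so the commutant of $A$ over $\mathbb{Q}$ is $\mathbb{Q}[A]$; since $X$ commutes with $A = p(X)$, you get $X = r(A)$ with $r \in \mathbb{Q}[\lambda]$, whence $f(\lambda) \mid p(r(\lambda)) - \lambda$ and evaluation at each root does the rest (the degenerate case $p(r(\lambda)) \equiv \lambda$ is harmless, as every polynomial divides the zero polynomial). Each approach has its merits: the paper's detour through (ii) produces the factor $h(\lambda)$ and the equality $\mathbb{Q}(\gamma_i) = \mathbb{Q}(\mu_{j_i})$, objects that are central to the rest of the paper; your argument is more self-contained linear algebra, avoids the spectral mapping theorem and degree counting entirely, and yields a single polynomial $r$ with $\gamma_j = r(\mu_j)$ for \emph{all} $j$ simultaneously --- which immediately gives the strengthened ``for every eigenvalue'' form of (iii) noted after Theorem \ref{Thm:main}, and mirrors the representation $\gamma_i = \alpha_0 + \alpha_1\mu_i + \cdots + \alpha_{n-1}\mu_i^{n-1}$ that the paper has to derive separately in its proof of (iii) $\Rightarrow$ (ii).
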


\bigskip

\section{Main Results} \label{sec:main}

The main theorem that we prove in this article is an extension of Theorem 1 of \cite{reams} and is stated as follows.

\begin{theorem}\label{Thm:main}
    Let $n$ be a natural number, $p(\lambda)$ any polynomial over $\mathbb{Q}$ and let $A \in M_{n}(\mathbb{Q})$ have irreducible characteristic polynomial $f(\lambda)$ with degree $n$. Let $\mu_i, 1 \leq i \leq n$, denote the roots of $f(\lambda)$. Then the following are equivalent:
    
    \begin{itemize}
        \item[(i)] $A = p(X)$ has a solution over $\mathbb{Q}$;
        \item[(ii)]  $f(p(\lambda))$ has a factor $h(\lambda)$ of degree $n$ over $\mathbb{Q}$;
        \item[(iii)] There exist an eigenvalue $\mu\in\sigma(A)$ and an element 
        $\gamma\in\mathbb{Q}(\mu)$ such that $p(\gamma) = \mu$.
    \end{itemize}
\end{theorem}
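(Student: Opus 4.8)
The plan is to prove the three statements equivalent by establishing a cycle of implications, $(i) \Rightarrow (ii) \Rightarrow (iii) \Rightarrow (i)$, drawing on the preliminary results already assembled. The guiding intuition throughout is that a rational solution $X$ of $p(X)=A$ forces a compatibility between the eigenvalues of $A$ and the values of $p$: if $\mu$ is an eigenvalue of $A$ and $\gamma$ is a corresponding eigenvalue of $X$, then $p(\gamma)=\mu$ by the spectral mapping theorem, and irreducibility of $f$ pins down the field in which $\gamma$ must live.

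\medskip

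\noindent\textbf{Sketch of $(i) \Rightarrow (ii)$.}
Assume $A=p(X)$ with $X\in M_n(\QQ)$. Let $h(\lambda)$ be the characteristic polynomial of $X$; it is monic of degree $n$ with rational coefficients, and its roots $\gamma_1,\ldots,\gamma_n$ are the eigenvalues of $X$. By the spectral mapping theorem applied over $\overline{\QQ}$, the eigenvalues of $p(X)=A$ are exactly $p(\gamma_1),\ldots,p(\gamma_n)$, and these are the roots $\mu_1,\ldots,\mu_n$ of $f(\lambda)$. Hence for each $i$ we have $f(p(\gamma_i))=0$, so every root of $h(\lambda)$ is a root of $f(p(\lambda))$. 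Since $h$ is a monic degree-$n$ polynomial over $\QQ$ all of whose roots are roots of $f(p(\lambda))$, it divides $f(p(\lambda))$ in $\QQ[\lambda]$, giving the required factor of degree $n$.

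\medskip

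\noindent\textbf{Sketch of $(ii) \Rightarrow (iii)$.}
Suppose $f(p(\lambda))$ has a monic factor $h(\lambda)$ of degree $n$ over $\QQ$, with roots $\gamma_1,\ldots,\gamma_n$. Each $\gamma_i$ satisfies $f(p(\gamma_i))=0$, so $p(\gamma_i)$ is one of the $\mu_j$. Fix one root $\gamma:=\gamma_1$ and set $\mu:=p(\gamma)$, so that $\mu$ is an eigenvalue of $A$ and $p(\gamma)=\mu$. It remains to show $\gamma\in\QQ(\mu)$. Here I would use that $f$ is irreducible of degree $n$, so $[\QQ(\mu):\QQ]=n$; since $\gamma$ is a root of the degree-$n$ rational polynomial $h$ we have $[\QQ(\gamma):\QQ]\le n$. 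Because $\mu=p(\gamma)\in\QQ(\gamma)$, we get $\QQ(\mu)\subseteq\QQ(\gamma)$, and comparing the two bounds forces $[\QQ(\gamma):\QQ]=n$ and $\QQ(\gamma)=\QQ(\mu)$; in particular $\gamma\in\QQ(\mu)$. The subtle point to verify is that $h$ is irreducible (equivalently that $\gamma$ has degree exactly $n$): this follows because $\QQ(\mu)\subseteq\QQ(\gamma)$ already gives $[\QQ(\gamma):\QQ]\ge n$, pinning the degree at $n$ and making $h$ the minimal polynomial of $\gamma$.

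\medskip

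\noindent\textbf{Sketch of $(iii) \Rightarrow (i)$.}
Given an eigenvalue $\mu$ of $A$ and $\gamma\in\QQ(\mu)$ with $p(\gamma)=\mu$, the field equality $\QQ(\gamma)=\QQ(\mu)$ again holds (the inclusion $\QQ(\mu)\subseteq\QQ(\gamma)$ is automatic from $\mu=p(\gamma)$, and $\gamma\in\QQ(\mu)$ gives the reverse). Since $A$ is nonderogatory (its characteristic polynomial $f$ is irreducible, so equals its minimal polynomial), the $\QQ$-algebra $\QQ[A]$ is isomorphic to $\QQ[\lambda]/(f(\lambda))\cong\QQ(\mu)$. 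I would then write $\gamma=q(\mu)$ for some $q\in\QQ[\lambda]$ expressing $\gamma$ in the generator $\mu$, and define $X:=q(A)\in M_n(\QQ)$. Because $p(\gamma)=\mu$ corresponds under the isomorphism $\QQ[A]\cong\QQ(\mu)$ to the identity $p(q(\mu))=\mu$, the same polynomial identity $p(q(\lambda))\equiv\lambda \pmod{f(\lambda)}$ holds, whence $p(X)=p(q(A))=A$, producing the desired rational solution.

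\medskip

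\noindent I expect the main obstacle to lie in $(iii)\Rightarrow(i)$: the clean transfer of the scalar identity $p(\gamma)=\mu$ to the matrix identity $p(X)=A$ must be justified carefully through the algebra isomorphism $\QQ[A]\cong\QQ(\mu)$, taking care that the polynomial $q$ representing $\gamma$ over $\mu$ indeed satisfies $p(q(\lambda))\equiv\lambda\pmod{f(\lambda)}$ rather than merely agreeing at the single value $\mu$. Irreducibility of $f$ is precisely what makes this evaluation-to-identity passage valid, since it identifies evaluation at $\mu$ with reduction modulo $f$.
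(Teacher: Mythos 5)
Your proposal is correct in substance, but it closes the equivalence by a genuinely different route than the paper. The paper proves (i)$\Leftrightarrow$(ii) directly (minimal polynomial of $X$ plus Cayley--Hamilton in one direction; the companion matrix of $h$ and similarity in the other), then (ii)$\Rightarrow$(iii) by the same degree/tower argument you use, and finally (iii)$\Rightarrow$(ii) by a Galois-theoretic construction: the Galois group of $f$ acts transitively on the roots $\mu_j$, one sets $\gamma_j=g_j(\gamma_1)$, forms $h(\lambda)=\prod_j(\lambda-\gamma_j)$, and proves rationality of the coefficients of $h$ via the fundamental theorem of symmetric polynomials. You instead prove (iii)$\Rightarrow$(i) directly: since $f$ is simultaneously the minimal polynomial of $\mu$ over $\QQ$ and the minimal polynomial of $A$, evaluation at $\mu$ and evaluation at $A$ both have kernel $(f(\lambda))$; so writing $\gamma=q(\mu)$, the relation $p(q(\mu))=\mu$ forces $p(q(\lambda))\equiv\lambda \pmod{f(\lambda)}$, whence $p(q(A))=A$. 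This is valid --- irreducibility is exactly what turns vanishing at $\mu$ into divisibility by $f$, so the ``evaluation-to-identity'' passage you flagged as the main obstacle goes through. Your route is shorter and avoids Galois theory and symmetric functions entirely (it is close in spirit to Drazin's construction, which the paper discusses separately); what it gives up is the explicit construction of the degree-$n$ factor $h$ from a single admissible $\gamma$, and the byproduct, noted in the paper right after the theorem, that (iii) then holds for \emph{every} eigenvalue $\mu\in\sigma(A)$ --- a fact the paper's Galois argument delivers for free and reuses later (e.g.\ in its proposition on the number of solutions).

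One small repair is needed in your (i)$\Rightarrow$(ii): the principle ``a monic degree-$n$ rational polynomial all of whose roots are roots of $f(p(\lambda))$ divides $f(p(\lambda))$'' is false when $h$ has repeated roots (compare $(\lambda-1)^2$ against $(\lambda-1)(\lambda-2)$). You must first observe that the eigenvalues $\gamma_1,\ldots,\gamma_n$ of $X$ are pairwise distinct: counted with multiplicity, $p(\gamma_1),\ldots,p(\gamma_n)$ are the eigenvalues of $A$, and these are the $n$ distinct roots of the irreducible polynomial $f$, so the $\gamma_i$ are distinct and the characteristic polynomial of $X$ is squarefree. (Equivalently, argue as the paper does: $f(p(X))=f(A)=0$ by Cayley--Hamilton, so the \emph{minimal} polynomial of $X$ divides $f(p(\lambda))$, and distinctness of the $\gamma_i$ forces that minimal polynomial to have degree $n$.) With that one sentence added, your argument stands.
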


It will follow from the proof  that the third statement above is also equivalent to: for every eigenvalue $\mu\in \sigma(A)$ there is an element $\gamma\in\mathbb{Q}(\mu)$ such that $p(\gamma) = \mu$.

The implication (i) implies (iii) follows from \cite{Drazin}, Proposition 2.3. 
We will provide an alternative independent argument in the proof below.

\begin{proof}
The equivalence of (i) and (ii) is already stated as Proposition 1, \cite{reams}, which attributes the result to T.J. Laffey and B. Cain. For completeness' sake, we provide the main ideas of the proof. Assuming (i) holds, let $X$ be a rational solution of $p(X)=A$, let $h(\lambda)$ be the minimal polynomial of $X$, and let $\gamma_i, i=1, \ldots , n$ be the eigenvalues of $X$. By the spectral mapping theorem $p(\gamma_i)=\mu_i$ (after possibly reordering), and since $f(\lambda)$ is irreducible, this means that the $\gamma_i$'s are all different, as are the $\mu_i$'s. Hence $h(\lambda)$ is a polynomial of degree $n$ in $\mathbb{Q}[\lambda]$. Moreover, by the Cayley-Hamilton theorem $f(A)=f(p(X))=0$, and hence $h(\lambda)$ divides $f(p(\lambda))$. 

Conversely, suppose $h(\lambda)$ is a polynomial of degree $n$ in $\mathbb{Q}[\lambda]$ which divides $f(p(\lambda))$. Let $C_h$ denote the companion matrix of $h(\lambda)$. Then $p(C_h)$ is similar to $A$, because $f(p(C_h))=0$ and $f(\lambda)$ is irreducible. Hence, there is an invertible matrix $S$ such that $S^{-1}p(C_h)S=A$. Take $X=S^{-1}C_hS$, then $p(X)=A$.

\medskip

(ii) $\Rightarrow$ (iii). Let $h(\lambda)\in \mathbb{Q}[\lambda]$ be a factor of degree $n$ of $f(p(\lambda))$, and let us say $h(\lambda) = (\lambda -\gamma_1) \cdots (\lambda - \gamma_n)$, where $\gamma_{i} \in \overline{\mathbb{Q}}$, for $1 \leq i \leq n.$

Then $f(p(\gamma_i)) = 0$, so that $p(\gamma_{i}) = \mu_{j_i}$ for some $j_i \in \{1,2, \ldots , n \}$. Hence $\mu_{j_i}\in\mathbb{Q}(\gamma_i)$.

Since $n\geq \left[\mathbb{Q}(\gamma_i):\mathbb{Q} \right] = \left[\mathbb{Q}(\gamma_i):\mathbb{Q}(\mu_{j_i}) \right] \cdot \left[\mathbb{Q}(\mu_{j_i}):\mathbb{Q} \right]$ and $\left[\mathbb{Q}(\mu_{j_i}):\mathbb{Q} \right] = n$, it follows that $\left[\mathbb{Q}(\gamma_i):\mathbb{Q} \right] = n$, so $h(\lambda) \in \mathbb{Q}[\lambda]$ must be irreducible.
By the same argument, $ \left[\mathbb{Q}(\gamma_i):\mathbb{Q}(\mu_{j_i}) \right]=1$, and hence $\gamma_i\in\mathbb{Q}(\mu_{j_i})$, so that in fact $\mathbb{Q}(\gamma_i)=\mathbb{Q}(\mu_{j_i})$.
This shows part (iii).

\medskip

(iii) $\Rightarrow$ (ii). Let $\mu$ be an eigenvalue of $A$ such that there is a $\gamma\in\mathbb{Q}(\mu)$ with $p(\gamma)=\mu$. Without loss of generality, possibly after renumbering the $\mu_i$'s, we may assume that this holds for $\mu_1$, and let us denote $\gamma$ by $\gamma_1$.

Since $f(\lambda)$ is irreducible, the Galois group $G$ of $f(\lambda)$ acts transitively on $\{\mu_1,\cdots,\mu_n\}$. That is, by Theorem \ref{Thm:irr+GroupAction}, for each $\mu_j \in \sigma(A)$ there is an automorphism $g_j$ in $G$ such that $g_j(\mu_1)=\mu_j$. Define $\gamma_j=g_j(\gamma_1)$, and take 
\[
h(\lambda)=(\lambda-\gamma_1)(\lambda-\gamma_2)\cdots (\lambda-\gamma_n).
\]
Then $p(\gamma_j)=p(g_j(\gamma_1))=g_j(p(\gamma_1))=g_j(\mu_1)=\mu_j$, because $p(\lambda)$ has rational coefficients, and $g_j$ fixes $\mathbb{Q}$. Hence $f(p(\gamma_j))=f(\mu_j)=0$, and since the $\mu_j$'s are all different, so are the $\gamma_j$'s. Thus, $h(\lambda)$ is a divisor of $f(p(\lambda))$ of degree $n$. 

It remains to show that $h(\lambda)\in\mathbb{Q}[\lambda]$. We shall show this in a  direct manner.  The proof is based on the fact that
\[
h(\lambda)=\sum_{j=0}^n \lambda^{n-j} e_{j}(\gamma_1, \ldots, \gamma_n),
\]
where $e_j(\gamma_1,\ldots , \gamma_n)$ is the elementary symmetric polynomial of degree $j$. So,
\begin{align*}
e_0(\gamma_1, \ldots, \gamma_n)&=1, \\ e_1(\gamma_1, \ldots, \gamma_n)&=\gamma_1+\cdots +\gamma_n,\\
e_2(\gamma_1, \ldots, \gamma_n)&=\sum_{1\leq k<j\leq n} \gamma_k\gamma_j,\\
e_3(\gamma_1, \ldots, \gamma_n)&=\sum_{1\leq k<j<l\leq n} \gamma_k\gamma_j\gamma_l,\\
\vdots & \\
e_n(\gamma_1, \ldots, \gamma_n)&=\Pi_{j=1}^n \gamma_j.
\end{align*}
In a similar manner
\[
f(\lambda)=(\lambda-\mu_1)\cdots (\lambda-\mu_n)=\sum_{j=0}^n \lambda^{n-j} e_{j}(\mu_1, \ldots, \mu_n),
\]
and since we know that $f(\lambda)\in\mathbb{Q}[\lambda]$, we have that $e_j(\mu_1, \ldots, \mu_n)\in\mathbb{Q}$ for $j=1, \ldots , n$. 

Now, because of the fact that $\gamma_1\in\mathbb{Q}(\mu_1)$, and $f(\lambda)$ is the minimal polynomial of $\mu_1$ over $\mathbb{Q}$, there are rational numbers
$\alpha_0, \ldots , \alpha_{n-1}$ such that 
\[
\gamma_1=\alpha_0+\alpha_1\mu_1+\alpha_2\mu_1^2+\cdots + \alpha_{n-1}\mu_1^{n-1}.
\]
Applying $g_i$ left and right we obtain
\[
\gamma_i=\alpha_0+\alpha_1\mu_i+\alpha_2\mu_i^2+\cdots + \alpha_{n-1}\mu_i^{n-1},
\]
for the same $\alpha_0, \ldots , \alpha_{n-1}$. We have to show that $e_j(\gamma_1, \ldots ,\gamma_n)\in\mathbb{Q}$
for $j=0, 1, \ldots , n$. To see this, note that by inserting the formulas for $\gamma_i $ in terms of $\mu_i$, we have
\[
e_j(\gamma_1,\gamma_2 \ldots ,\gamma_n)=e_j\left(\sum_{j=0}^{n-1} \alpha_j\mu_1^j, \sum_{j=0}^{n-1} \alpha_j\mu_2^j, \ldots , \sum_{j=0}^{n-1} \alpha_j\mu_n^j\right).
\]
This is a symmetric polynomial in $\mu_1, \ldots , \mu_n$ with rational coefficients. By the \emph{fundamental theorem of symmetric polynomials} (see \cite{Bewersdorff}, Theorem 5.1) any symmetric polynomial in $\mu_1, \ldots , \mu_n$ with rational coefficients has a unique representation as a polynomial in $e_1(\mu_1, \ldots , \mu_n), \ldots , e_n(\mu_1, \ldots , \mu_n)$ with rational coefficients. Since the numbers
$e_j(\mu_1, \ldots , \mu_n)$ are rational as well, it follows that also the numbers $e_j(\gamma_1, \ldots , \gamma_n)$ are rational. 
\end{proof}

The following example illustrates in a concrete manner the final part of the above proof.

\begin{example} 
Take $n=3$ (for $n=2$ it is all fairly easy). We have to show that
$\gamma_1+\gamma_2+\gamma_3\in\mathbb{Q}$, $\gamma_1\gamma_2+\gamma_1\gamma_3+\gamma_2\gamma_3\in\mathbb{Q}$, and $\gamma_1\gamma_2\gamma_3\in\mathbb{Q}$. 
We use $\gamma_j=\alpha_0+\alpha_1\mu_j+\alpha_2\mu_j^2$ for $j=1,2,3$.
Then
\[
\gamma_1+\gamma_2+\gamma_3=3\alpha_0+\alpha_1(\mu_1+\mu_2+\mu_3)+
\alpha_2(\mu_1^2+\mu_2^2+\mu_3^2).
\]
We know that $\mu_1+\mu_2+\mu_3\in\mathbb{Q}$, so to show that $\gamma_1+\gamma_2+\gamma_3\in\mathbb{Q}$ it suffices to show that $\mu_1^2+\mu_2^2+\mu_3^2\in\mathbb{Q}$.
To see the latter we could invoke the fundamental theorem of symmetric polynomials, but let us do it directly:
\[
(\mu_1+\mu_2+\mu_3)^2=\mu_1^2+\mu_2^2+\mu_3^2 +2(\mu_1\mu_2 +\mu_1\mu_3+\mu_2\mu_3).
\]
So we have
\[
\mu_1^2+\mu_2^2+\mu_3^2 =e_1(\mu_1,\mu_2,\mu_3)^2-2e_2(\mu_1,\mu_2,\mu_3).
\]
The right-hand side is rational, so the left-hand side is rational as well.

Next, we compute $\gamma_1\gamma_2+\gamma_1\gamma_3+\gamma_2\gamma_3$. After a bit of computation we obtain
\begin{align*}
&\gamma_1\gamma_2+\gamma_1\gamma_3+\gamma_2\gamma_3=
3\alpha_0^2+2\alpha_0\alpha_1(\mu_1+\mu_2+\mu_3) 
\\
&+2\alpha_0\alpha_2(\mu_1^2+\mu_2^2+\mu_3^2)+\alpha_1^2(\mu_1\mu_2+\mu_1\mu_3+\mu_2\mu_3) \\
&+\alpha_1\alpha_2(\mu_1^2\mu_2+\mu_1\mu_2^2+\mu_1^2\mu_3+\mu_1\mu_3^2+\mu_2^2\mu_3+\mu_2\mu_3^2) \\
&+\alpha_2^2((\mu_1\mu_2)^2+(\mu_1\mu_3)^2+(\mu_2\mu_3)^2).
\end{align*}
We already have that $\mu_1^2+\mu_2^2+\mu_3^2$ is rational, thus we only need to show that the last two terms are rational. For the term with $\alpha_1\alpha_2$ use
\begin{align*}
&\mu_1^2\mu_2+\mu_1\mu_2^2+\mu_1^2\mu_3+\mu_1\mu_3^2+\mu_2^2\mu_3+\mu_2\mu_3^2
\\ =&
(\mu_1\mu_2+\mu_1\mu_3+\mu_2\mu_3)(\mu_1+\mu_2+\mu_3)-3\mu_1\mu_2\mu_3\\ =&
e_2(\mu_1,\mu_2,\mu_3)e_1(\mu_1,\mu_2,\mu_3)-3e_3(\mu_1,\mu_2,\mu_3).
\end{align*}
For the term with $\alpha_2^2$, use
\begin{align*}
&(\mu_1\mu_2)^2+(\mu_1\mu_3)^2+(\mu_2\mu_3)^2\\ =&
(\mu_1\mu_2+\mu_1\mu_3+\mu_2\mu_3)^2-2(\mu_1^2\mu_2\mu_3+\mu_1\mu_2^2\mu_3+\mu_1\mu_2\mu_3^2) \\ =&
(\mu_1\mu_2+\mu_1\mu_3+\mu_2\mu_3)^2 -2(\mu_1+\mu_2+\mu_3)\mu_1\mu_2\mu_3 \\ =&
e_2(\mu_1,\mu_2,\mu_3)^2-2e_1(\mu_1,\mu_2,\mu_3)e_3(\mu_1,\mu_2,\mu_3).
\end{align*}

Finally, we compute $\gamma_1\gamma_2\gamma_3$. This gives a total of 27 terms, which we group as follows
\begin{align*}
\gamma_1\gamma_2\gamma_3=&(\alpha_0+\alpha_1\mu_1+\alpha_2\mu_1^2)(\alpha_0+\alpha_1\mu_2+\alpha_2\mu_2^2)(\alpha_0+\alpha_1\mu_3+\alpha_2\mu_3^2)\\ =&
\alpha_0^3+\alpha_0^2\alpha_1(\mu_1+\mu_2+\mu_3)+\alpha_0^2\alpha_2(\mu_1^2+\mu_2^2+\mu_3^2) \\ 
&+\alpha_1^2\alpha_0(\mu_1\mu_2+\mu_1\mu_3+\mu_2\mu_3) \\
&+\alpha_0\alpha_2^2((\mu_1\mu_2)^2+(\mu_1\mu_3)^2+(\mu_2\mu_3)^2) \\
&+\alpha_0\alpha_1\alpha_2(\mu_1^2\mu_2+\mu_1^2\mu_3+\mu_1\mu_2^2+\mu_1\mu_3^2 +\mu_2^2\mu_3+\mu_2\mu_3^2) \\
&+\alpha_1^3\mu_1\mu_2\mu_3+\alpha_1^2\alpha_2(\mu_1\mu_2\mu_3^2+\mu_1\mu_2^2\mu_3+\mu_1^2\mu_2\mu_3)\\
&+\alpha_1\alpha_2^2(\mu_1\mu_2^2\mu_3^2+\mu_1^2\mu_2\mu_3^2+\mu_1^2\mu_2^2\mu_3) +\alpha_2^2(\mu_1\mu_2\mu_3)^2.
\end{align*}
From the above, all terms are in $\mathbb{Q}$ except for the terms  involving $\alpha_1^2\alpha_2$ and $\alpha_1\alpha_2^2$. 
For the term with $\alpha_1^2\alpha_2$, use 
\[
\mu_1\mu_2\mu_3^2+\mu_1\mu_2^2\mu_3+\mu_1^2\mu_2\mu_3=
\mu_1\mu_2\mu_3(\mu_1+\mu_2+\mu_3)=e_3(\mu_1,\mu_2,\mu_3)   e_1(\mu_1,\mu_2,\mu_3).
\]
For the term with $\alpha_1\alpha_2^2$, use
\begin{align*}
\mu_1\mu_2^2\mu_3^2+\mu_1^2\mu_2\mu_3^2+\mu_1^2\mu_2^2\mu_3 &=
\mu_1\mu_2\mu_3(\mu_1\mu_2 +\mu_1\mu_3+\mu_2\mu_3) \\ &=e_3(\mu_1,\mu_2,\mu_3) e_2(\mu_1,\mu_2,\mu_3).
\end{align*}

It follows that $h(\lambda)=(\lambda-\gamma_1)(\lambda-\gamma_2)(\lambda-\gamma_3)$ has rational coefficients.
Obviously, while this brute-force approach works for $n=2$ and $n=3$, it will be very hard to do for $n>3$, so an appeal to the fundamental theorem of symmetric polynomials is very much preferred for the general case.
\hfill$\Box$
\end{example}

\bigskip

In \cite{reams}, for the special case $p(\lambda)=\lambda^m$ with $m>2$, a connection is made between the solvability of $X^m=A$ and the orders of the Galois groups of $f(\lambda^m)$ and $f(\lambda)$. Now, the fact that (i) is equivalent to (ii) in Theorem \ref{Thm:main} does not require the condition $\mathbb{Q}(\lambda_1, \ldots , \lambda_n)\cap \mathbb{Q}(\zeta)=\mathbb{Q}$ for some choice of roots $\lambda_1, \ldots , \lambda_n$ of $f(\lambda^2)$, as stated in Theorem 1 of \cite{reams}. Moreover, Theorem \ref{Thm:main} does hold when $m=2$, since if we consider $f(\lambda)=\lambda^3+3$, then the following example shows that $f(\lambda^2)=\lambda^6+3$ has no factor $h(\lambda)$ of degree $3$ in $\mathbb{Q}[\lambda]$. This is in contrast with the main theorem in \cite{reams}, which does not hold in case $m=2$.

\begin{example}
Consider
\[
A=\begin{bmatrix*}[r] 0 & 1 & 0 \\ 0 & 0 & 1 \\ -3 & 0 & 0  \end{bmatrix*}.
\]
The matrix $A$ has characteristic polynomial $f(\lambda)=\lambda^3+3$. We take $p(\lambda)=\lambda^2$. The roots of $f(p(\lambda))=\lambda^6+3$ are given by
$\gamma_k=\sqrt[6]{3} e^{i(\pi/6+k\pi/3)}$ for $k=0, 1, \ldots, 5$. The minimum polynomial of each of these roots is of degree $6$, actually, since $f(p(\lambda))$ is irreducible over $\mathbb{Q}$ this is the minimum polynomial for each of these roots. Hence for each of these roots we have $[\mathbb{Q}(\gamma_k):\mathbb{Q}]=6$. As we see from the proof of Theorem~\ref{Thm:main}, this would have to be $3$  for the existence of a rational matrix $X$ such that $X^2=A$. It is also directly clear that 
none of the $\gamma_k$'s are in $\mathbb{Q}(\sqrt[3]{3})$ or in $\mathbb{Q}(\sqrt[3]{3}e^{i\pi/3})$, because that would imply that the minimal polynomial of such a $\gamma_k$ would have degree $3$ rather than $6$.\hfill $\Box$
\end{example}

\bigskip 
 
To illustrate the fact that the connection made in \cite{reams} 
between the solvability of $X^m=A$ and the orders of the Galois groups of $f(\lambda^m)$ and $f(\lambda)$
is very specific for the case $p(\lambda)=\lambda^m$ with $m>2$, we consider in the next example the Galois groups for a quadratic polynomial.

\begin{example}
    Let $p(\lambda)=\lambda^2-\lambda-1$ and let $f(\lambda)=\lambda^3+3\lambda^2+21\lambda-11$. Then $f(\lambda)$ is irreducible over $\mathbb{Q}$. Note that this example was constructed by taking a matrix 
    \begin{equation*}
        X =
        \begin{bmatrix*}[r]
            1 & 0 & 2 \\
           -1 & 1 & 0 \\
            0 & 3 & 1
        \end{bmatrix*}
    \end{equation*}
    and then calculating the characteristic polynomial $f(\lambda)$ of $p(X)$:
    \begin{equation*}
        A = X^2 - X - I =
        \begin{bmatrix*}[r]
            -1 &  6 &  2 \\
            -1 & -1 & -2 \\
            -3 &  3 & -1
        \end{bmatrix*}.
    \end{equation*}
    The roots of $f(\lambda)$, calculated using the  cubic formula, are:
    \begin{eqnarray*}
        \mu_1 & = & (\sqrt[3]{6})^2-\sqrt[3]{6}-1,\\
        \mu_2 & = & -1-\frac{1}{2}\left[(\sqrt[3]{6})^2-\sqrt[3]{6}\right]+\frac{1}{2}i\sqrt{3}\left[(\sqrt[3]{6})^2+\sqrt[3]{6}\right],\\
        \mu_3 & = & -1-\frac{1}{2}\left[(\sqrt[3]{6})^2-\sqrt[3]{6}\right]-\frac{1}{2}i\sqrt{3}\left[(\sqrt[3]{6})^2+\sqrt[3]{6}\right].
    \end{eqnarray*}
    The composition is $f(p(\lambda))=\lambda^6-3\lambda^5+3\lambda^4-\lambda^3+18\lambda^2-18\lambda-30$ and can be factored as $f(p(\lambda))=(\lambda^3-6)(\lambda^3-3\lambda^2+3\lambda+5)$. So, according to Theorem~\ref{Thm:main}, there should be a rational solution $X$ to $p(X)=A$. This is the case by construction. 
    
    The roots of $f(p(\lambda))$ are as follows:
    \begin{eqnarray*}
        \gamma_1=\sqrt[3]{6}; && \gamma_2=-\frac{1}{2}\sqrt[3]{6}-i\frac{\sqrt{3}}{2}\sqrt[3]{6};\qquad\quad \gamma_3=-\frac{1}{2}\sqrt[3]{6}+i\frac{\sqrt{3}}{2}\sqrt[3]{6};\\
        \gamma_4=-\sqrt[3]{6}+1; && \gamma_5=1+\frac{1}{2}\sqrt[3]{6}+i\frac{1}{2}\sqrt{3}\sqrt[3]{6};\quad \gamma_6=1+\frac{1}{2}\sqrt[3]{6}-i\frac{1}{2}\sqrt{3}\sqrt[3]{6}.
    \end{eqnarray*}
    When these roots are inserted into $p(\lambda)$, we obtain the following: $p(\gamma_1)=\mu_1=p(\gamma_4)$, $p(\gamma_2)=\mu_2=p(\gamma_5)$, $p(\gamma_3)=\mu_3=p(\gamma_6)$. The first three roots are the roots of $\lambda^3-6$, while the last three roots are the roots of $\lambda^3-3\lambda^2+3\lambda+5$, which are also the eigenvalues of $X$. 
    
    \bigskip
    
    It is easy to see that $\mathbb{Q}(\mu_1,\mu_2,\mu_3)=\mathbb{Q}(\sqrt[3]{6},i\sqrt{3})$. Thus, the automorphisms in the Galois group are completely determined by where they send $\sqrt[3]{6}$ and $i\sqrt{3}$. Now, since $\tau$ is a homomorphism that fixes $\mathbb{Q}$, we get the following:
    \begin{equation} \label{eq:GaloisElemDef1}
        6=\tau(6)=\tau(\sqrt[3]{6}\sqrt[3]{6}\sqrt[3]{6})=\tau(\sqrt[3]{6})\tau(\sqrt[3]{6})\tau(\sqrt[3]{6})=\left(\tau(\sqrt[3]{6})\right)^3
    \end{equation}
    and
    \begin{equation} \label{eq:GaloisElemDef2}
        -3 = \tau(-3) = \tau(i\sqrt{3}\,i\sqrt{3}) = \tau(i\sqrt{3})\tau(i\sqrt{3}) = \left(\tau(i\sqrt{3})\right)^2.
    \end{equation}
    This means that $\tau(\sqrt[3]{6})$ is a cube root of $6$ and $\tau(i\sqrt{3})$ is a square root of $-3$, so there are three options for where $\tau$ will send $\sqrt[3]{6}$ and two options for where $\tau$ will send $i\sqrt{3}$. 
    
    The Galois group $G={\mathit{Gal}}(\mathbb{Q}(\mu_1,\mu_2,\mu_3):\mathbb{Q})$ therefore has at most 6 elements and since the automorphisms in $G$ permute three elements ($\mu_1,\mu_2,\mu_3$), $G$ is a subgroup of $S_3$, the symmetric group on three letters.
    
    Define the elements in the Galois group $G$ according to \eqref{eq:GaloisElemDef1} and \eqref{eq:GaloisElemDef2}. Let $\tau_1$ be defined as the identity: $\tau_1(\sqrt[3]{6})=\sqrt[3]{6}$ and $\tau_1(i\sqrt{3})=i\sqrt{3}$; let $\tau_2$ and $\tau_3$ be defined as:
    \begin{equation*}
        \tau_2(\sqrt[3]{6}) = -\frac{1}{2}\sqrt[3]{6} - \frac{i\sqrt{3}}{2}\sqrt[3]{6}; \quad \quad \tau_2(i\sqrt{3}) = i\sqrt{3}
    \end{equation*}
    and
    \begin{equation*}
        \tau_3(\sqrt[3]{6}) = -\frac{1}{2}\sqrt[3]{6} + \frac{i\sqrt{3}}{2}\sqrt[3]{6}; \quad \quad \tau_3(i\sqrt{3}) = i\sqrt{3},
    \end{equation*}
    respectively. With the first three Galois group elements defined in this way, we have $\tau_1(\mu_1) = \mu_1$, $\tau_2(\mu_1) = \mu_2$ and $\tau_3(\mu_1) = \mu_3$. The remaining three elements are defined as follows:
    \begin{equation*}
        \tau_4(\sqrt[3]{6})=\sqrt[3]{6};\quad\quad \tau_4(i\sqrt{3})=-i\sqrt{3};
    \end{equation*} 
    \begin{equation*}
        \tau_5(\sqrt[3]{6}) = -\frac{1}{2}\sqrt[3]{6} - \frac{i\sqrt{3}}{2}\sqrt[3]{6}; \quad \quad \tau_5(i\sqrt{3}) = -i\sqrt{3};
    \end{equation*}
    and
    \begin{equation*}
        \tau_6(\sqrt[3]{6}) = -\frac{1}{2}\sqrt[3]{6} + \frac{i\sqrt{3}}{2}\sqrt[3]{6}; \quad \quad \tau_6(i\sqrt{3}) = -i\sqrt{3}.
    \end{equation*}
    The Galois element $\tau_4$ is complex conjugation. We have the following maps for $\mu_1$: $\tau_4(\mu_1) = \mu_1$; $\tau_5(\mu_1) = \mu_2$; $\tau_6(\mu_1) = \mu_3$. Note that $\mu_1$ (and $\gamma_1$) only depends on where $\sqrt[3]{6}$ is sent to.
    
    We now show that $\tau_2(\mu_1)=\mu_2$.
    \begin{eqnarray*}
        \tau_2(\mu_1)&=& \tau_2 \left((\sqrt[3]{6})^2-\sqrt[3]{6}-1\right)\\
        &=& (\tau_2(\sqrt[3]{6}))^2-\tau_2(\sqrt[3]{6})-1\\
        &=& (\sqrt[3]{6})^2\left(\frac{1}{4}+\frac{\sqrt{3}}{2}i-\frac{3}{4}\right)-\left(-\frac{1}{2}\sqrt[3]{6}-\frac{i\sqrt{3}}{2}\sqrt[3]{6}\right)-1\\
        &=& (\sqrt[3]{6})^2\left(-\frac{1}{2}+\frac{i\sqrt{3}}{2}\right)+\frac{1}{2}\sqrt[3]{6}+\frac{i\sqrt{3}}{2}\sqrt[3]{6}-1\\
        &=& -1-\frac{1}{2}\left[(\sqrt[3]{6})^2-\sqrt[3]{6}\right]+\frac{i\sqrt{3}}{2}\left[(\sqrt[3]{6})^2+\sqrt[3]{6}\right]\\
        &=& \mu_2.
    \end{eqnarray*}
    The fact that $\tau_2(\mu_2)=\mu_3$ and $\tau_2(\mu_3)=\mu_1$ follows similarly. Finally, one can easily see that if $\tau_i$ sends $\mu_1$ to $\mu_j$, then $\tau_i$ sends $\gamma_1$ to $\gamma_j$ for $1\leq i\leq 6$, $1\leq j\leq 3$. 
    
    Since $\gamma_i\in\mathbb{Q}(\sqrt[3]{6}, i\sqrt{3})$ for $i=1, \ldots , 6$, the splitting fields of $f(p(\lambda))$ and $f(\lambda)$ coincide, and hence also the Galois groups $Gal(\mathbb{Q}(\gamma_1, \ldots ,\gamma_6):\mathbb{Q})$ and $Gal(\mathbb{Q}(\mu_1,\mu_2,\mu_3):\mathbb{Q})$ coincide.\hfill$\Box$
\end{example}

The previous examples are a special case of the following proposition.

\begin{proposition}
 Suppose $p(\lambda)\in \mathbb{Q}[\lambda]$ is a monic quadratic polynomial, and that
$f(\lambda)\in\mathbb{Q}[\lambda]$ is irreducible and of degree $n$. Assume also that for every
root $\mu$ of $f(\lambda)=0$ there is a $\gamma\in\mathbb{Q}(\mu)$ such that $p(\gamma)=\mu$.
Then the splitting field of $f(p(\lambda))$ over $\mathbb{Q}$ is equal to the splitting field of $f(\lambda)$
over $\mathbb{Q}$.
\end{proposition}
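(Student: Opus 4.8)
The plan is to prove the two splitting fields coincide by establishing containment in both directions. Write $L$ for the splitting field of $f(\lambda)$ over $\mathbb{Q}$, so that $L=\mathbb{Q}(\mu_1,\ldots,\mu_n)$, and write $M$ for the splitting field of $f(p(\lambda))$ over $\mathbb{Q}$. The roots of $f(p(\lambda))$ are precisely the solutions $\gamma$ of the equations $p(\gamma)=\mu_j$ for $j=1,\ldots,n$; since $p$ is quadratic, each such equation contributes two roots, which together account for all $2n$ roots of $f(p(\lambda))$.

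First I would establish $L\subseteq M$. Every root $\gamma$ of $f(p(\lambda))$ lies in $M$ and satisfies $p(\gamma)=\mu_j$ for some $j$; because $p$ has rational coefficients, $\mu_j=p(\gamma)\in M$. Since every $\mu_j$ arises in this way, all roots of $f(\lambda)$ lie in $M$, whence $L=\mathbb{Q}(\mu_1,\ldots,\mu_n)\subseteq M$. This direction uses nothing about the degree of $p$.

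The reverse containment $M\subseteq L$ is where the quadratic hypothesis does the work. Write $p(\lambda)=\lambda^2+b\lambda+c$ with $b,c\in\mathbb{Q}$ and fix an index $j$. By hypothesis there is a root $\gamma_j\in\mathbb{Q}(\mu_j)$ of $p(\gamma)=\mu_j$. By Vieta's formulas the two roots of this monic quadratic sum to $-b$, so the second root equals $-b-\gamma_j$; since $b\in\mathbb{Q}$ and $\gamma_j\in\mathbb{Q}(\mu_j)$, this second root also lies in $\mathbb{Q}(\mu_j)\subseteq L$. Hence both roots of $p(\gamma)=\mu_j$ lie in $L$ for every $j$, so all $2n$ roots of $f(p(\lambda))$ lie in $L$, giving $M\subseteq L$. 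Combining the two containments yields $L=M$.

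The step to highlight is the Vieta observation in the third paragraph: for a monic quadratic, knowing one root of $p(\gamma)=\mu$ inside $\mathbb{Q}(\mu)$ automatically forces the other root into $\mathbb{Q}(\mu)$, because the two roots differ by an element of $\mathbb{Q}$. No genuine obstacle arises here, but it is worth emphasizing that this is exactly what fails for $p$ of higher degree: the remaining roots of $p(\gamma)=\mu$ need not lie in $\mathbb{Q}(\mu)$, so in general the splitting field of $f(p(\lambda))$ can properly contain that of $f(\lambda)$. This is why the conclusion is special to the quadratic case, consistent with the examples preceding the proposition.
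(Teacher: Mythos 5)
Your proof is correct and takes essentially the same approach as the paper's: both containments, with the key point that for a monic quadratic, one root of $p(\gamma)=\mu$ lying in $\mathbb{Q}(\mu)$ forces the other root into $\mathbb{Q}(\mu)$ as well. The only (cosmetic) difference is that you use the Vieta sum $\gamma_1+\gamma_2=-b\in\mathbb{Q}$, whereas the paper uses the product $\gamma_1\gamma_2=p_0-\mu$; your variant is marginally cleaner, since the product argument implicitly requires $\gamma_1\neq 0$ to solve for $\gamma_2$.
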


\begin{proof}
Notice that for every complex number $\mu$ there are two solutions $\gamma_1, \gamma_2$
of $p(\lambda)=\mu$. Let $p(\lambda)=\lambda^2+p_1\lambda +p_0$. Then $\gamma_1\gamma_2=p_0-\mu$.
Hence, if $\gamma_1\in\mathbb{Q}(\mu)$, then also $\gamma_2\in\mathbb{Q}(\mu)$.

Now let $\mathbb{Q}(\mu_1, \ldots, \mu_n)$ be the splitting field of $f(\lambda)$ over $\mathbb{Q}$.
By assumption, for each $\mu_i$ there is at least one $\gamma_{i1}\in\mathbb{Q}(\mu_i)$ such that
$p(\gamma_{i1})=\mu_i$. As argued above, it follows that also the other solution $\gamma_{i2}$
must be in $\mathbb{Q}(\mu_i)$, and so both solutions are in $\mathbb{Q}(\mu_1, \ldots , \mu_n)$.
The roots of $f(p(\lambda))=0$ are given by the $2n$ solutions of $p(\lambda)=\mu_i$, for $i=1, \ldots , n$,
and therefore $\mathbb{Q}(\gamma_{11}, \gamma_{12}, \gamma_{21}, \gamma_{22}, \ldots ,
\gamma_{n1},\gamma_{n2})\subset \mathbb{Q}(\mu_1, \mu_2, \ldots , \mu_n)$.

Since $\mu_i=p(\gamma_{i1})$ for all $i=1, \ldots , n$ the other inclusion is evident.\end{proof}

It is clear that a similar argument will fail when $p(\lambda)$ is of degree larger than two.
This is illustrated in the following example.

\begin{example}
Let $A=\begin{bmatrix*}[r] 1 & -2 \\ -4 & 1 \end{bmatrix*}$. The characteristic polynomial of $A$ is
$f(\lambda)=(\lambda-1)^2-8$, which is irreducible. The splitting field of $f(\lambda)$ is $\mathbb{Q}(\sqrt{2})$; the eigenvalues of $A$ are $1\pm 2\sqrt{2}$. One checks directly that with $p(\lambda)=\lambda^3-4\lambda+1$, and $X=\begin{bmatrix} 0 & 1 \\ 2 & 0\end{bmatrix}$ we have $p(X)=A$. Then $f(p(\lambda))=(\lambda^3-4\lambda)^2-8=\lambda^6-8\lambda^4+16\lambda^2-8$, which factorizes as 
\[
f(p(\lambda))= (\lambda^2-2)(\lambda^4-6\lambda^2+4)=(\lambda^2-2)((\lambda^2-3)^2-5).
\]
Hence the six roots of $f(p(\lambda))=0$ are given by $\pm\sqrt{2} , \pm\sqrt{3\pm\sqrt{5}}$.
Thus the splitting field of $f(p(\lambda))$ is $\mathbb{Q}(\sqrt{2},\sqrt{3+\sqrt{5}})$.
\hfill $\Box$
\end{example}

\bigskip

\subsection*{Connection with a constructive approach of Drazin's paper.}

\bigskip
Next, we consider how the main result connects with a more constructive approach which originates in the paper by Drazin, \cite{Drazin}. %, and was further elaborated on in our earlier paper \cite{GJRST}.
The setting is the same as above: $A$ is an $n\times n$ matrix with entries in $\mathbb{Q}$, with an irreducible characteristic polynomial $f(\lambda)$, and $p(\lambda)$ is a polynomial with coefficients in $\mathbb{Q}$. We summarize the results of \cite{Drazin}.
If $p(X)=A$ has a solution $X$ with entries in $\mathbb{Q}$, for every eigenvalue $\mu_j$ of $A$ ($j=1,2,\ldots ,n$), there is at least one $\gamma_j\in\mathbb{Q}(\mu_j)$ such that $p(\gamma_j)=\mu_j$ (see \cite{Drazin}, Proposition 2.3).

Conversely, let $p(\gamma)=\mu$ for some eigenvalue $\mu$ of $A$ and some $\gamma\in\mathbb{Q}(\mu)$. Since $f(\lambda)$ is irreducible, $f(\lambda)$ is the minimum polynomial of $\mu$ over $\mathbb{Q}$ and hence $[\mathbb{Q}(\mu):\mathbb{Q}]=n$. Let $Aw=\mu w$, so $w$ is the eigenvector of $A$ corresponding to eigenvalue $\mu$. Then the entries of $w$ are in $\mathbb{Q}(\mu)$, so there is an $n\times n$ matrix $W$ with entries in $\mathbb{Q}$ such that $Wv_n(\mu)=w$, where $v_n(\mu)$ is the vector $v_n(\mu)=\begin{bmatrix} 1 & \mu & \mu^2 & \cdots & \mu^{n-1}\end{bmatrix}^T$. Now, following \cite{Drazin}, we can construct $X$ as follows. From the fact that we want $p(X)=A$ we derive that necessarily 
$Xw=\gamma w$. The latter equation can also be expressed as $XWv_n(\mu)=\gamma Wv_n(\mu)$.
Because of the fact that $\gamma\in\mathbb{Q}(\mu)$, there is an $n\times n$ matrix $C$ with rational entries such that $\gamma Wv_n(\mu)=Cv_n(\mu)$. It can be shown (see \cite{Drazin}) that $W$ is invertible, so solving $X$ from $XW=C$ produces an $n\times n$ matrix $X$ with rational entries such that $p(X)=A$.

\subsection*{Number of Solutions}
\bigskip

%There is one issue that was still bugging Madelein and me. Namely, we would like to say that the number of solutions of $p(X) = A$ is equal to the number of admissible $\gamma$'s 
%corresponding to $\mu_i$, for any $i$.

Introduce the following terminology:
a solution $\gamma$ of $p(\lambda) = \mu_i$ will be called \textit{admissible} if $\gamma \in \QQ(\mu_i)$, and two admissible solutions $\gamma_i$ and $\gamma_j$ are called \emph{$G$-connected} if there exists an element $g$ of $G$ such that $g(\gamma_i) = \gamma_j$. 

 %Usually, we start the procedure by looking at the $\gamma$s associated to $\mu_1$, and then act with the Galois group to produce admissible $\gamma$s associated to the remaining eigenvalues of $A$. However, in principle, one can begin the analysis at any of the eigenvalues. This raises the following questions:

\begin{proposition}\label{numberofsolutions}
Let $A$ be an $n\times n$ matrix over $\mathbb{Q}$ with irreducible characteristic polynomial $f(\lambda)$, and let $p(\lambda)$ be any polynomial over $\mathbb{Q}$. Then for every eigenvalue $\mu$ of $A$ the number of admissible $\gamma$'s is the same, and this number equals the number of rational solutions to $p(X)=A$.
\end{proposition}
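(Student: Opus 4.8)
The plan is to prove the statement in two stages: first that the number of admissible solutions does not depend on the chosen eigenvalue, and then that this common number equals the number of rational matrix solutions. Throughout I would use that $A$ is nonderogatory: since $f(\lambda)$ is irreducible of degree $n$ over a field of characteristic zero it is separable, so $A$ has $n$ distinct eigenvalues $\mu_1,\dots,\mu_n$ and each eigenspace is one-dimensional.

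For the first stage, I would fix two eigenvalues $\mu_i,\mu_j$ and, using Theorem~\ref{Thm:irr+GroupAction}, choose $g\in G$ with $g(\mu_i)=\mu_j$. If $\gamma\in\mathbb{Q}(\mu_i)$ is admissible for $\mu_i$, then $g(\gamma)\in g(\mathbb{Q}(\mu_i))=\mathbb{Q}(\mu_j)$ and $p(g(\gamma))=g(p(\gamma))=g(\mu_i)=\mu_j$, since $p$ has rational coefficients and $g$ fixes $\mathbb{Q}$; hence $g(\gamma)$ is admissible for $\mu_j$. Running the same argument with $g^{-1}$ gives the inverse map, so $\gamma\mapsto g(\gamma)$ is a bijection between the admissible sets for $\mu_i$ and for $\mu_j$, and the two counts coincide.

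For the second stage I would fix one eigenvalue $\mu:=\mu_1$, choose a nonzero eigenvector $w\in\mathbb{Q}(\mu)^n$ with $Aw=\mu w$ (one exists because $\ker(A-\mu I)$ is cut out by linear equations over $\mathbb{Q}(\mu)$), and build a bijection between the rational solutions $X$ of $p(X)=A$ and the admissible $\gamma$ for $\mu$. Given a solution $X$, it commutes with $A=p(X)$ and therefore preserves the one-dimensional eigenspace spanned by $w$, so $Xw=\gamma_X w$ for a unique scalar $\gamma_X$; comparing $p(X)w=Aw=\mu w$ with $p(X)w=p(\gamma_X)w$ forces $p(\gamma_X)=\mu$, and writing $\gamma_X=(Xw)_k/w_k$ for a nonzero component $w_k$ shows $\gamma_X\in\mathbb{Q}(\mu)$. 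Thus $X\mapsto\gamma_X$ lands in the admissible solutions, and I would prove it is a bijection.

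Surjectivity should be immediate from the constructive procedure recalled in the subsection on Drazin's approach: starting from an admissible $\gamma$, that construction returns a rational $X$ with $Xw=\gamma w$ and $p(X)=A$, whence $\gamma_X=\gamma$. The hard part will be injectivity, i.e.\ showing that the single eigenvalue $\gamma_X$ determines $X$ completely. For this I would set $w_j:=g_j(w)$ with $g_j(\mu)=\mu_j$; applying $g_j$ entrywise to $Aw=\mu w$ identifies $w_j$ as an eigenvector of $A$ for $\mu_j$, and applying $g_j$ entrywise to $Xw=\gamma_X w$ — valid precisely because $X$ has rational entries — gives $Xw_j=g_j(\gamma_X)\,w_j$. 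As $\mu_1,\dots,\mu_n$ are distinct, $\{w_1,\dots,w_n\}$ is a basis of $\overline{\mathbb{Q}}^{\,n}$, so $X$ is pinned down on a basis by the single value $\gamma_X$, and two solutions sharing the same $\gamma_X$ must be equal. Chaining the two stages then yields the proposition, with the Galois-equivariance that propagates $\gamma_X$ to the full eigenvalue list $g_1(\gamma_X),\dots,g_n(\gamma_X)$ being the decisive point.
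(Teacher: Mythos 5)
Your proof is correct, and its second half takes a genuinely different route from the paper's. For the first stage (the count of admissible $\gamma$'s is the same for every eigenvalue) you and the paper rely on the same two facts: transitivity of $G$ on the roots of $f(\lambda)$, and the observation that any $g\in G$ fixing $\mu_i$ fixes $\mathbb{Q}(\mu_i)$ pointwise; your packaging of this as an explicit bijection $\gamma\mapsto g(\gamma)$ between the two admissible sets is cleaner than the paper's formulation in terms of $G$-connected elements. The substantive difference is in the second stage: the paper disposes of the equality between the number of admissible elements and the number of rational solutions of $p(X)=A$ by citing Drazin, whereas you actually prove it, by showing that $X\mapsto\gamma_X$ (defined via $Xw=\gamma_X w$ on a $\mathbb{Q}(\mu)$-eigenvector $w$ of $A$) is a bijection onto the admissible set. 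Your injectivity argument --- conjugating the relation $Xw=\gamma_X w$ by each $g_j$ to get $Xw_j=g_j(\gamma_X)w_j$ on the eigenbasis $w_j=g_j(w)$, which pins down $X$ completely --- is not in the paper at all, and it is sound: the entries of $w$ lie in the splitting field, $X$ is rational so commutes with the entrywise action of $g_j$, and eigenvectors for the $n$ distinct $\mu_j$ form a basis of $\overline{\mathbb{Q}}^{\,n}$. Surjectivity still leans on Drazin's construction, but only on the part the paper itself recalls explicitly (including the invertibility of $W$). What your version buys is a nearly self-contained proof of the proposition, and as a byproduct it shows that every rational solution is determined by, and recoverable from, its single eigenvalue $\gamma_X$; what the paper's version buys is brevity, at the cost of outsourcing the entire solution count to \cite{Drazin}.
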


\begin{proof}Let $\gamma_i$ be an admissible solution of $p(\lambda) = \mu_i$ for $1 \leq i \leq n$.  We use a set of $n$ $G$-connected admissible elements to construct the factor $h(\lambda)$. It cannot occur that two distinct admissible elements $\gamma_i^1$ and $\gamma_i^2$ associated to $\mu_i$ are both $G$-connected to the same admissible element $\gamma_j$ associated to $\mu_j$.  
Indeed, if $g \in G$ fixes some eigenvalue $\mu_i$, then the restriction $g|_{\QQ(\mu_i)}$ must be the identity, and hence $g$ must fix each admissible element associated to $\mu_i$ since by definition they all lie inside $\QQ(\mu_i)$. From this it also follows that 
the number of admissible $\gamma$'s associated to an eigenvalue $\mu_i$ is the same for every eigenvalue $\mu_i$.

Now the number of rational solutions to $p(X)=A$ is equal to the number of admissible solutions of $p(\lambda)=\mu_i$ (see \cite{Drazin}).
Therefore, the number of solutions of $p(X) = A$ is equal to the number of admissible elements associated to any eigenvalue.
\end{proof}
\bigskip

\section{The simple case}

\medskip

By using the idea of working separately with the irreducible parts of the characteristic polynomial of a simple matrix and using the companion-Jordan form of the matrix, we can prove the following result.
Recall that a matrix is called \emph{simple} if the algebraic multiplicity of each eigenvalue is one.
In particular, when the characteristic polynomial of the matrix is irreducible, then the matrix is simple, but the converse is not true.

\begin{proposition}
Let $\mathbb{F}$ be a field such that $\mathbb{Q}\subset \mathbb{F}\subset \mathbb{C}$ and $A\in M_n(\mathbb{F})$ be a simple matrix. Let the characteristic polynomial of $A$ be\\ $f(\lambda)=f_1(\lambda)f_2(\lambda)\cdots f_r(\lambda)$, for some $r$, where $f_i(\lambda)$ are distinct, irreducible and of degree $n_i$. Let $p(\lambda)\in\mathbb{F}[\lambda]$. Then $p(X)=A$ has a solution $B\in M_n(\mathbb{F})$ if and only if $f_i(p(\lambda))$ has a factor of degree $n_i$ in $\mathbb{F}[\lambda]$, for each $1 \leq i \leq r$.
\end{proposition}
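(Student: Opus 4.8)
The plan is to reduce the simple case to the irreducible case already settled in Theorem~\ref{Thm:main}, treating one primary block at a time. A preliminary observation makes this possible: Theorem~\ref{Thm:main} remains valid verbatim with $\mathbb{Q}$ replaced by any field $\mathbb{F}$ with $\mathbb{Q}\subset\mathbb{F}\subset\mathbb{C}$, because its proof uses only that $\mathbb{F}$ has characteristic zero (so that every irreducible polynomial is separable and its Galois group acts transitively on its roots, as in Theorem~\ref{Thm:irr+GroupAction}) together with the fundamental theorem of symmetric polynomials over $\mathbb{F}$. Consequently, for each $i$, a matrix $A_i\in M_{n_i}(\mathbb{F})$ with irreducible characteristic polynomial $f_i$ admits a solution of $p(Y_i)=A_i$ over $\mathbb{F}$ if and only if $f_i(p(\lambda))$ has a factor of degree $n_i$ over $\mathbb{F}$.

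Next I would put $A$ into block form. Since $A$ is simple it has $n$ distinct eigenvalues, so it is nonderogatory and its minimal and characteristic polynomials both equal $f=f_1\cdots f_r$. As the $f_i$ are distinct irreducibles, the primary (Chinese remainder) decomposition of $\mathbb{F}[\lambda]/(f)$ shows that $A$ is similar over $\mathbb{F}$ to $\hat A=\mathrm{diag}(C_{f_1},\ldots,C_{f_r})$, where $C_{f_i}$ is the companion matrix of $f_i$. Fixing $S\in GL_n(\mathbb{F})$ with $S^{-1}AS=\hat A$, the equation $p(X)=A$ has a solution $X\in M_n(\mathbb{F})$ if and only if $p(Y)=\hat A$ has a solution $Y\in M_n(\mathbb{F})$, via $Y=S^{-1}XS$.

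For the ``if'' direction, assuming each $f_i(p(\lambda))$ has a factor of degree $n_i$ over $\mathbb{F}$, the preliminary observation applied to $A_i=C_{f_i}$ yields $Y_i\in M_{n_i}(\mathbb{F})$ with $p(Y_i)=C_{f_i}$; then $Y=\mathrm{diag}(Y_1,\ldots,Y_r)$ satisfies $p(Y)=\hat A$, and $X=SYS^{-1}$ solves $p(X)=A$. The substance of the argument lies in the ``only if'' direction, and here the main obstacle is that a solution $Y$ of $p(Y)=\hat A$ need not \emph{a priori} respect the block structure. To handle this I would observe that $Y$ commutes with $\hat A=p(Y)$; writing $Y=(Y_{ij})$ in the block structure of $\hat A$, the relation $Y\hat A=\hat A Y$ reads $C_{f_i}Y_{ij}=Y_{ij}C_{f_j}$. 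For $i\ne j$ the companion matrices $C_{f_i}$ and $C_{f_j}$ have disjoint spectra, since the distinct irreducibles $f_i$ and $f_j$ share no roots, so this Sylvester equation forces $Y_{ij}=0$. Hence $Y=\mathrm{diag}(Y_1,\ldots,Y_r)$ is block diagonal with $p(Y_i)=C_{f_i}$ for each $i$, and applying the preliminary observation to each block shows that $f_i(p(\lambda))$ has a factor of degree $n_i$ over $\mathbb{F}$, as required.
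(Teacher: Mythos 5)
Your proof is correct, and its skeleton matches the paper's: conjugate $A$ over $\mathbb{F}$ to the block-diagonal matrix $\hat A=\mathrm{diag}(C_{f_1},\ldots,C_{f_r})$ of companion matrices of the irreducible factors (the paper gets this from the companion-Jordan form, \cite{GJRST2} Theorem 2.1; your primary/Chinese-remainder decomposition of $\mathbb{F}[\lambda]/(f)$ for a nonderogatory $A$ is the same reduction), then settle each block by the irreducible-case theorem. You diverge at the one genuinely delicate point, the ``only if'' direction, where one must show that a solution necessarily respects the block structure. The paper outsources this to Proposition 5.1 of \cite{GJRST2}; you prove it on the spot: $Y$ commutes with $p(Y)=\hat A$, so each off-diagonal block satisfies the Sylvester equation $C_{f_i}Y_{ij}=Y_{ij}C_{f_j}$, which forces $Y_{ij}=0$ because distinct irreducible polynomials over a field of characteristic zero have disjoint root sets. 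This makes your argument self-contained (modulo Theorem~\ref{Thm:main}) where the paper's is a citation, at the cost of reproving what is essentially that proposition. You also repair a point the paper passes over silently: Theorem~\ref{Thm:main} is stated over $\mathbb{Q}$, yet the proposition needs it over $\mathbb{F}$, and the paper simply invokes ``Proposition 1 of \cite{reams}, or our main theorem above'' to produce a factor in $\mathbb{F}[\lambda]$. Your preliminary observation --- that the proof of Theorem~\ref{Thm:main} works verbatim over any $\mathbb{F}$ with $\mathbb{Q}\subset\mathbb{F}\subset\mathbb{C}$, since characteristic zero gives separability and hence transitivity of the Galois action, and the fundamental theorem of symmetric polynomials holds over $\mathbb{F}$ --- is exactly the justification needed there, and stating it explicitly is an improvement on the paper's exposition.
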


\begin{proof}
Let $A\in M_n(\mathbb{F})$ be simple with characteristic polynomial $f(\lambda)=f_1(\lambda)f_2(\lambda)\cdots f_r(\lambda)$, where $f_i(\lambda)$ is irreducible and of degree $n_i$. By the companion-Jordan form (see \cite{GJRST2}, Theorem 2.1, also \cite{robinson}), there exists an invertible matrix $T\in M_n(\mathbb{F})$ such that 
\begin{equation}\label{eqGeneralSimpleA}
A=T^{-1}\begin{bmatrix}
C_1 &&0\\
&\ddots&\\
0&&C_r
\end{bmatrix} T,
\end{equation}
where $C_i$ is the $n_i\times n_i$ companion matrix of the polynomial $f_i(\lambda)$. 

Suppose $p(X)=A$ has a solution $B\in M_n(\mathbb{F})$. Then by Proposition 5.1 in \cite{GJRST2},  $B$ is of the form 
\begin{equation*}
B=T^{-1}\begin{bmatrix}
B_1 &&0\\
&\ddots&\\
0&&B_r
\end{bmatrix} T,
\end{equation*}
where the sizes of $B_i$ correspond to those of $C_i$. Now, from $p(B)=A$ we  obtain $r$ different equations $p(B_i)=C_i$, $1\leq i\leq r$, since we can write 
\begin{align*}
T^{-1}p\left(B_1\oplus\cdots\oplus B_r\right)T
&= p\left(T^{-1}(B_1\oplus\cdots\oplus B_r)T\right)=p(B)=A\\ &= T^{-1}\left(C_1\oplus\cdots\oplus C_r\right)T.
\end{align*}
It is easy to check the facts that $p(X_1\oplus\cdots\oplus X_r)=p(X_1)\oplus\cdots\oplus p(X_r)$ and $T^{-1}p(X)T=p(T^{-1}XT)$ for any polynomial $p(\lambda)$ and square matrices $X_i$ and $X$.
 Remember that $C_i$ has irreducible characteristic polynomial $f_i(\lambda)$. Therefore, by Proposition 1 of \cite{reams}, or our main theorem above, $f_i(p(\lambda))$ has a factor of degree $n_i$ in $\mathbb{F}[\lambda]$ for all $1\leq i\leq r$.

Conversely, let $f_i(p(\lambda))$ have a factor of degree $n_i$ in $\mathbb{F}[\lambda]$ for all $1\leq i\leq r$. Then, again by Proposition 1 of \cite{reams} or the main theorem above, $p(X)=C_i$ has a solution with $C_i$ as in \eqref{eqGeneralSimpleA}, that is, there is a $B_i\in M_{n_i}(\mathbb{F})$ such that $p(B_i)=C_i$. Hence, we have $p(B_1\oplus\cdots\oplus B_r)=C_1\oplus\cdots\oplus C_r$ and then 
\begin{align*}
p(B)&= p\left(T^{-1}(B_1\oplus\cdots\oplus B_r) T\right) \\
&= T^{-1}p\left(B_1\oplus\cdots\oplus B_r\right) T \\
&= T^{-1}\left(C_1\oplus\cdots\oplus C_r\right)T = A.
\end{align*}
Thus, $p(X)=A$ has a solution $B$ in $M_n(\mathbb{F})$.
\end{proof}

\section{The nonderogatory case}

Let $A$ be nonderogatory, and assume that the characteristic polynomial is given by $f(\lambda)=
f_1(\lambda)^{d_1}\cdot f_2(\lambda)^{d_2} \cdots f_r(\lambda)^{d_r}$ with the $f_i(\lambda)$'s pairwise coprime and irreducible and of degree $k_j$. We follow the construction of Theorem 7.1 in \cite{GJRST2}. Let $p(\lambda)=\sum_{i=0}^l p_i\lambda^i \in\mathbb{Q}[\lambda]$. 

Assume that for each eigenvalue $\mu$ of $A$ there is a solution
$\gamma\in \mathbb{Q}(\mu)$ of $p(\gamma)=\mu$. If $\mu$ is a root of $f_j(\lambda)$, let $G_j$ be the Galois group of $f_j(\lambda)$, and consider the factor $h_j(\lambda)$ of degree $k_j$ of $f_j(p(\lambda))$ which we obtain using the explicit construction above, that is, we consider the action of $G_j$ on $\gamma$ to obtain the $k_j$ roots of $h_j(\lambda)$. Denote these roots by $\gamma_{j,1}, \ldots , \gamma_{j,k_j}$. Now assume that for each $j$ for which $d_j>1$, there is at least one $\gamma\in\mathbb{Q}(\mu_j)$ such that for each pair of roots $\gamma_{j,s}$ and $\gamma_{j,t}$ we have that condition (13) in \cite{GJRST2} is satisfied, that is:
\[
\sum_{m=1}^l p_m \sum_{i=0}^{m-1} \gamma_{j,s}^i\gamma_{j,t}^{m-1-i}\not=0.
\]
Then, according to Theorem 7.1, part (iii) in \cite{GJRST2} there is a rational solution $X$ to $p(X)=A$. 
This gives a completely algebraic sufficient condition for the existence of a rational solution.
However, the condition is not necessary, as pointed out in \cite{GJRST2}.

\medskip

{\bf Acknowledgement.} This work is based on research supported  in part by the National Research Foundation of
South Africa  (Grant Number 145688). Opinions expressed and conclusions arrived 
at are those of the authors and are not necessarily to be attributed to the NRF.


\begin{thebibliography}{xx}

\bibitem{Bewersdorff}
J. Bewersdorff. \emph{Galois Theory for Beginners, A Historical Perspective}. Amer. Math. Soc. 2006.

\bibitem{Drazin} M.P.~Drazin. Exact rational solutions of the matrix equation $A=p(X)$ by linearization. \emph{Linear Algebra Appl.}, {\bf 426} (2007) 502--515. 

\bibitem{EU}
J-C.~Evard and F.~Uhlig.
On the matrix equation $f(X)=A$.
\emph{Linear Algebra Appl.}, {\bf 162--164} (1992), 447--519.

\bibitem{FI1}
F.~Fasi and B.~Iannazzo. Computing primary solutions of equations involving primary matrix functions. \emph{Linear Algebra Appl.}, {\bf 560} (2019), 17--42.

\bibitem{FI2}
F.~Fasi and B.~Iannazzo. Substitution algorithms for rational matrix equations. \emph{Electron. Trans. Numer. Anal.}, {\bf 53} (2020), 500--521.

\bibitem{FI3}
F.~Fasi and B.~Iannazzo. The dual inverse scaling and squaring algorithm for the matrix logarithm. \emph{IMA J. Numer. Anal.}, {\bf 42} (2022), 2829--2851.

\bibitem{GJRST}
G.J. Groenewald, D.B. Janse van Rensburg, A.C.M. Ran, F. Theron, M. van Straaten.
\newblock{$m$th roots of $H$-selfadjoint matrices.}
\newblock{\it Linear Algebra Appl.} 610 (2021), 804--826.

\bibitem{GJRST2}
G.J. Groenewald, D.B. Janse van Rensburg, A.C.M. Ran, F. Theron, M. van Straaten.
\newblock{The solutions of the matrix equation $p(X)=A$, with polynomial function $p(\lambda)$ over field extensions of $\mathbb{Q}$.}
\newblock{\it Linear Algebra Appl.} 665 (2023), 107--138.

\bibitem{higham}
N.J.~Higham. 
\newblock{\it Functions of Matrices: Theory and Computation},
\newblock{Society for Industrial and Applied Mathematics, Philadelphia, PA, USA}, 2008.

\bibitem{otero} D.E.~Otero. Extraction of $m$th roots in matrix rings over fields.
{\it Linear Algebra Appl.} {\bf 128} (1990), 1--26.

\bibitem{pinter} C.C.~Pinter \emph{A book of Abstract Algebra}, Second edition, Dover Publ., New York, 1990.

\bibitem{psarrakos}
{P.J.~Psarrakos}.
\newblock {On the $m$th roots of a complex matrix},
\newblock {\it Electron. J. Linear Algebra}, {\bf 9} (2002), 32--41.

\bibitem{reams}
R.~Reams.
A Galois approach to $m$th roots of matrices with rational entries.
{\it Linear Algebra Appl.}, {\bf 258} (1997), 187--194.

\bibitem{robinson}
D.W.~Robinson. The generalized Jordan canonical form. \emph{Amer. Math. Monthly}, {\bf 77} (1970), 392--395.

\bibitem{roman}
S.~Roman.
\emph{Advanced Linear Algebra}, Third edition,
Springer, New York, 2008. 

\bibitem{roth}
W.E.~Roth. A solution of the matric equation $P(X)=A$. 
\emph{Transact. Amer. Math. Soc.} {\bf 30} (1928), 579--596.

\bibitem{Spiegel}
E. Spiegel. On the matrix roots of $f(X)=A$.
\emph{Indian J. Pure Appl. Math.} {\bf 19} (1988), 854--864.

\bibitem{Stewart}
I. Stewart, \emph{Galois Theory}, Second edition, Chapman \& Hall 1989.

\bibitem{tenhave}
{G.~ten Have}.
{Structure of the $n$th roots of a matrix},
{\it Linear Algebra Appl.}, {\bf 187} (1993), 59--66.

\bibitem{wedderburn}
{J.H.M.~Wedderburn}.
{\it Lectures on Matrices},
{American Mathematical Society, New York}, 1934.
\end{thebibliography}
\end{document}